\pgfplotsset{compat=1.16}
\newtheorem{theorem}{Theorem}[section]
\newtheorem{lemma}[theorem]{Lemma}
\newtheorem{remark}[theorem]{Remark}
\def\section{\@startsection {section}{1}{\z@}{3.25ex plus 1ex minus
		.2ex}{1.5ex plus .2ex}{\large\bf}}
\def\subsection{\@startsection{subsection}{2}{\z@}{3.25ex plus 1ex minus
		.2ex}{1.5ex plus .2ex}{\normalsize\bf}}
\title{A new look to branching Brownian motion from a particle-based
	reaction–diffusion dynamics point of view}
\author{Alberto Lanconelli\thanks{Dipartimento di Scienze Statistiche Paolo Fortunati, Università di Bologna, Bologna, Italy. \textbf{e-mail}: alberto.lanconelli2@unibo.it} \and Berk Tan Perçin\thanks{Dipartimento di Scienze Statistiche Paolo Fortunati, Università di Bologna, Bologna, Italy. \textbf{e-mail}: berktan.percin2@unibo.it}}
\date{\today}
\begin{document}
	
	\maketitle
	
	\bigskip
	
	\begin{abstract}
		Aim of this note is to analyse branching Brownian motion within the class of models introduced in the recent paper \cite{delRazo} and called chemical diffusion master equations. These models provide a description for the probabilistic evolution of chemical reaction kinetics associated with spatial diffusion of individual particles. We derive an infinite system of Fokker-Planck equations that rules the probabilistic evolution of the single particles generated by the branching mechanism and analyse its properties using Malliavin Calculus techniques, following the ideas proposed in \cite{Lanconelli_CDME}. Another key ingredient of our approach is the McKean representation for the solution of the Fisher-Kolmogorov-Petrovskii-Piskunov equation and a stochastic counterpart of that equation. We also derive the reaction-diffusion partial differential equation solved by the average concentration field of the branching Brownian system of particles. 
	\end{abstract}
	
	Key words and phrases: Branching Brownian Motion, particle-based reaction-diffusion dynamics, Malliavin calculus. 
	
	AMS 2020 classification: 60H07; 60H30; 92E20.
	
	\allowdisplaybreaks
	
	\bigskip
	
	\section{Introduction and statement of the main results}
	
	Let $\{X_k(t),k\in\{1,...,\mathtt{n}(t)\}\}_{t\geq 0}$ be a (binary) branching Brownian motion. That means, at time zero a single particle $\{X(t)\}_{t\geq 0}$ starting at the origin begins to perform Brownian motion in $\mathbb{R}$; after an exponential time $\tau$ of parameter one, the particle splits into two identical, independent copies of itself that start Brownian motion at $X(\tau)$. This process is repeated ad infinitum, producing a collection of $\mathtt{n}(t)$ particles $\{X_1(t),X_2(t),...,X_{\mathtt{n}(t)}(t)\}_{t\geq 0}$ (we refer the reader to \cite{BBM} and the references quoted there for more details on the subject). For $t\geq 0$, $n\geq 1$ and $A\in\mathcal{B}(\mathbb{R}^n)$ we set
	\begin{align}\label{def rho}
	\int_{A}\rho_n(t,y_1,...,y_n)dy_1\cdot\cdot\cdot dy_n&:=\mathbb{P}\left(\{\mathtt{n}(t)=n\}\cap\{(X_1(t),...,X_n(t))\in A\}\right);
	\end{align}
	notice that by construction the sequence $\{\rho_n\}_{n\geq 1}$ fulfils the constraint
	\begin{align}\label{constraint}
	\sum_{n\geq 1}\int_{\mathbb{R}^n}\rho_n(t,y_1,...,y_n)dy_1\cdot\cdot\cdot dy_n=1.
	\end{align}
	Furthermore, for all $n\geq 2$ and $t\geq 0$ the functions
	\begin{align*}
	(y_1,...,y_n)\mapsto\rho_n(t,y_1,...,y_n)
	\end{align*}
	are symmetric in their arguments: this formalizes the indistinguishability of the different Brownian particles generated by the branching mechanism. \\
	Our first main result reads as follows.
		\begin{theorem}\label{main 1}
		The sequence $\{\rho_n\}_{n\geq 1}$ solves the system of equations
		\begin{align}\label{CDME of BBM}
		\begin{cases}
		\partial_t\rho_n(t,y_1,...,y_n)= \frac{1}{2} \sum_{k,l=1}^n\partial^2_{y_ky_l}\rho_n(t,y_1,...,y_n)+\sum_{k=1}^{n-1}\rho_k\hat{\otimes} \rho_{n-k}(t,y_1,...,y_n)\\
		\quad\quad\quad\quad\quad\quad\quad\quad\quad\mbox-\rho_n(t,y_1,...,y_n),\quad\mbox{ for } n\geq 1,t>0,y_1,...,y_n\in\mathbb{R};\\
		\rho_1(0,y_1)=\delta_{0}(y_1),\quad y_1\in\mathbb{R};\\
		\rho_n(0,y_1,...,y_n)=0,\quad y_1,...,y_n\in\mathbb{R};
		\end{cases}
		\end{align}
		here, $\delta_0$ stands for the Dirac delta function concentrated at the origin while  $\hat{\otimes}$ denotes symmetric tensor product with respect to the variables $(y_1,...,y_n)$. Moreover, we have the representation
		\begin{align}\label{c}
		\rho_n(t,y_1,...,y_n)=g_n(t,x;x-y_1,...,x-y_n),\quad t\geq 0,x,y_1,...,y_n\in\mathbb{R}
		\end{align}
		where $\{g_n\}_{n\geq 1}$ is a sequence of functions with 
		\begin{align*}
		g_n:[0,+\infty[\times\mathbb{R}\times\mathbb{R}^{n}&\to\mathbb{R}\\
		(t,x,y_1,...,y_n)&\mapsto g_n(t,x;y_1,...,y_n)
		\end{align*}
		and defined recursively as
		\begin{align*}
		\begin{cases}
		g_1(t,x;y_1):=e^{-t}\mathtt{p}_{t}(x-y_1),\quad t\geq 0,x,y_1\in\mathbb{R};\\
		g_n(t,x;y_1,...,y_n):=e^{-t}\int_0^t\int_{\mathbb{R}}\mathtt{p}_{t-s}(x-z)e^{s}\sum_{k=1}^{n-1}g_k\hat{\otimes}g_{n-k}(s,z;y_1,...,y_n)dzds,\\
		\quad\quad\quad\quad\quad\quad\quad\quad\quad\quad\quad\quad\quad n\geq 2, t\geq 0\mbox{ and } x,y_1,...,y_n\in\mathbb{R}.
		\end{cases}
		\end{align*}
		The symbol $\mathtt{p}_{t}(x-z)$ stands for the one dimensional heat kernel $(2\pi t)^{-\frac{1}{2}}e^{-\frac{(x-z)^2}{2t}}$. 
	\end{theorem}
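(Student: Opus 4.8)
The plan is to establish the recursive representation \eqref{c} first, by a probabilistic argument, and then to read off the system \eqref{CDME of BBM} from the recursion by a short computation with Gaussian kernels. For $x\in\mathbb R$ let $q_n(t,x,\cdot)$ denote the density, defined exactly as in \eqref{def rho}, of the configuration of the $\mathtt{n}(t)=n$ particles at time $t$ of a branching Brownian motion \emph{started at $x$}, so that $\rho_n(t,\cdot)=q_n(t,0,\cdot)$; since $\mathtt{n}(t)$ has the geometric (Yule) law $\mathbb P(\mathtt{n}(t)=n)=e^{-t}(1-e^{-t})^{n-1}$, each $q_n(t,x,\cdot)$ lies in $L^1(\mathbb R^n)$ with total mass $e^{-t}(1-e^{-t})^{n-1}$. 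The first step is to prove, by induction on $n$, that $g_n(t,x;\cdot)=q_n(t,x,\cdot)$. For $n=1$ this is immediate: on the event $\{\tau>t\}$, of probability $e^{-t}$, there is a single particle whose position has density $\mathtt{p}_t(\cdot-x)$, so $q_1(t,x,y_1)=e^{-t}\mathtt{p}_t(x-y_1)=g_1(t,x;y_1)$. For $n\ge2$ I would condition on the first branching time $\tau=s\in(0,t)$ and the position $z=X(\tau)$ (joint density $e^{-s}\mathtt{p}_s(z-x)$); by the strong Markov property and the branching construction, after time $\tau$ the process consists of two independent branching Brownian motions started at $z$ and run for time $t-s$. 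Testing against an arbitrary symmetric $\varphi\colon\mathbb R^n\to\mathbb R$, splitting $\{\mathtt{n}(t)=n\}$ according to the sizes $k$ and $n-k$ of the two sub-populations (disjoint events for $k=1,\dots,n-1$, hence no over-counting), and using the independence of the two sub-trees, one obtains
\[
q_n(t,x,y_1,\dots,y_n)=\int_0^t e^{-s}\int_{\mathbb R}\mathtt{p}_s(z-x)\sum_{k=1}^{n-1}q_k\hat{\otimes}q_{n-k}(t-s,z;y_1,\dots,y_n)\,dz\,ds,
\]
the symmetrised tensor product being exactly what exchangeability of the resulting particles produces. Substituting $s\mapsto t-s$ and using $\mathtt{p}_u(x-z)=\mathtt{p}_u(z-x)$ turns the right-hand side into the recursion defining $g_n$, so the induction closes.

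Granting this, the representation \eqref{c} follows from two symmetries of Brownian motion: translation invariance gives $q_n(t,x,y_1,\dots,y_n)=q_n(t,0,y_1-x,\dots,y_n-x)$, and reflection invariance of Brownian motion started at the origin gives $q_n(t,0,z_1,\dots,z_n)=q_n(t,0,-z_1,\dots,-z_n)$, whence
\[
g_n(t,x;x-y_1,\dots,x-y_n)=q_n(t,x,x-y_1,\dots,x-y_n)=q_n(t,0,-y_1,\dots,-y_n)=\rho_n(t,y_1,\dots,y_n).
\]
In particular $g_n(t,x;x-y_1,\dots,x-y_n)$ does not depend on $x$; equivalently $g_n(t,x;w_1,\dots,w_n)=\rho_n(t,x-w_1,\dots,x-w_n)$ is a function of the differences $x-w_i$ alone. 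This translation covariance is what will convert the one–dimensional Laplacian in $x$ appearing below into the full second–order operator $\tfrac12\sum_{k,l}\partial^2_{y_ky_l}$.

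To obtain the evolution equation, for $n\ge2$ I would write the recursion as $g_n(t,\cdot\,;w)=\int_0^t e^{-(t-s)}P_{t-s}\big[G_n(s,\cdot\,;w)\big]\,ds$, where $P_u$ is the one–dimensional heat semigroup acting on the first slot and $G_n:=\sum_{k=1}^{n-1}g_k\hat{\otimes}g_{n-k}$; this is the Duhamel formula for $\partial_t u=\tfrac12\partial_x^2u-u+G_n$, $u(0,\cdot)=0$. Differentiating in $t$ (the moving endpoint contributes $P_0[G_n(t,\cdot\,;w)]=G_n(t,\cdot\,;w)$, and $\partial_uP_u=\tfrac12\partial_x^2P_u$ contributes $\tfrac12\partial_x^2g_n-g_n$) gives $\partial_t g_n=\tfrac12\partial_x^2 g_n-g_n+G_n$, and by translation covariance $\partial_x^2 g_n=\sum_{k,l=1}^n\partial^2_{w_kw_l}g_n$, so $\partial_t g_n=\tfrac12\sum_{k,l}\partial^2_{w_kw_l}g_n-g_n+G_n$. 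Evaluating at $w_i=x-y_i$ and using \eqref{c} together with $G_n(t,x;x-y)=\sum_{k=1}^{n-1}\rho_k\hat{\otimes}\rho_{n-k}(t,y)$ and the chain rule $\partial^2_{y_ky_l}\rho_n(t,y)=(\partial^2_{w_kw_l}g_n)(t,x;x-y)$ (the change of variables is affine with linear part $-\mathrm{Id}$, so the two sign changes cancel) yields exactly \eqref{CDME of BBM} for $n\ge2$. The case $n=1$ is checked directly from $\rho_1(t,y_1)=e^{-t}\mathtt{p}_t(y_1)$ and the heat equation, and the initial data follow from the representations: $e^{-t}\mathtt{p}_t\to\delta_0$ weakly, while for $n\ge2$ the integral over $[0,t]$ — equivalently the total mass $(1-e^{-t})^{n-1}e^{-t}$ — tends to $0$ as $t\to0$, so $\rho_n(0,\cdot)=0$.

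The main obstacle I foresee is twofold. First, the combinatorial bookkeeping in the first step: one must verify that the decomposition over sub-population sizes, after symmetrisation, reproduces the law of the unlabelled configuration with precisely the normalisation built into $\hat{\otimes}$, with neither over- nor under-counting; I would make this rigorous through the test-function computation indicated above, in the spirit of the chemical diffusion master equation formalism of \cite{delRazo,Lanconelli_CDME}. Second, the justification of differentiating under the integral sign in the last step, which needs each $g_k(s,\cdot\,;\cdot)$ with $k<n$ and $s>0$ to be smooth with Gaussian-type decay — a regularity fact obtainable either by an elementary induction on the recursion or from the smoothness of the densities exploited elsewhere in the paper. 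The remaining manipulations are routine computations with Gaussian kernels.
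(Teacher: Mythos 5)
Your argument is correct, but it takes a genuinely different route from the paper. The paper never touches the branching structure directly: it starts from the known McKean representation \cite{McKean,BBM}, by which $u(t,x)=\mathbb{E}\bigl[\prod_{k\le \mathtt{n}(t)}f(x-X_k(t))\bigr]$ solves the FKPP equation \eqref{PDE}, rewrites $u(t,x)$ as the $S$-transform of the generalized random variable $\hat U(t,x)=\sum_n I_n(\rho_n(t,x\mathtt{1}_n-\cdot))$, and separately shows that the Wick-type SPDE \eqref{SPDE} has a solution whose chaos kernels $g_n$ satisfy \eqref{system PDE} and whose $S$-transform also solves \eqref{PDE}; injectivity of the $S$-transform and uniqueness of chaos kernels then force $\rho_n(t,x\mathtt{1}_n-y)=g_n(t,x;y)$, from which \eqref{CDME of BBM} follows by the chain rule exactly as in your last step. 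You instead prove the identification $g_n(t,x;\cdot)=q_n(t,x,\cdot)$ directly by a first-branching-time (renewal) decomposition and then read off the PDE from the Duhamel formula --- in effect re-deriving at the level of $n$-particle densities the renewal identity that underlies McKean's theorem, rather than quoting that theorem and dualizing. Your route is more elementary and self-contained (no white-noise analysis is needed for Theorem \ref{main 1}), and it gives $g_n$ a transparent probabilistic meaning as the density of the configuration started at $x$; the cost is that you must carry out the combinatorial bookkeeping for $\sum_{k=1}^{n-1}q_k\hat{\otimes}q_{n-k}$ yourself (your normalization is right: the events indexed by $k$ are disjoint, the symmetrization matches the convention for $\hat{\otimes}$, and the masses reproduce the geometric law $e^{-t}(1-e^{-t})^{n-1}$), together with enough regularity of the $g_n$ to differentiate the Duhamel formula --- both of which you correctly flag. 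What the paper's detour buys is that the combinatorics is outsourced to the McKean representation and the algebra of the Wick product, and, more importantly, that the chaos-expansion/$S$-transform machinery it installs here is exactly what is then used to prove Theorem \ref{main 2}; your proof would leave that setup still to be done.
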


\begin{remark}
If we view branching Brownian motion as a system of particles undergoing Brownian diffusion and splitting chemical reactions (as described above), then following \cite{delRazo},\cite{delRazo2} (see also \cite{doi1976second}) we can interpret system \eqref{CDME of BBM} within the class of chemical diffusion master equations. Equation \eqref{CDME of BBM} has been previously derived in \cite{Moyal} and further developed in \cite{Adke} but not treated as presented here. 
\end{remark}	

Now, let $\{\rho_n\}_{n\geq 1}$ be solution to \eqref{CDME of BBM} and consider for $0\leq k\leq n$ the quantity
\begin{align}\label{p}
p^{\varepsilon}_{k,n-k}(t,x):={n \choose k}\int_{B_{\varepsilon}(x)^k\times  B^c_{\varepsilon}(x)^{n-k}}\rho_n(t,y_1,...,y_n)dy_1...dy_n.
\end{align} 	
This represents the probability of having $n$ particles at time $t$ with exactly $k$ of the them in the ball centred at $x\in\mathbb{R}$ and radius $\varepsilon>0$; moreover,
writing
\begin{align}\label{X}
\mathcal{X}^{\varepsilon}(t,x):=\sum_{n\geq 1}\sum_{k=0}^nkp^{\varepsilon}_{k,n-k}(t,x)
\end{align}
we obtain the average number of particles that are located at time $t$ in the ball $B_{\varepsilon}(x)$. Lastly, the average concentration of particles at $x$ is obtained through the limit
\begin{align*}
c(t,x):=\lim_{\varepsilon\to 0}\frac{\mathcal{X}^{\varepsilon}(t,x)}{\mathtt{vol}(B_{\varepsilon}(x))}, 
\end{align*} 	
and combining \eqref{p} with \eqref{X} one finds that $\{c(t,x)\}_{t\geq 0,x\in\mathbb{R}}$, which we call \emph{average concentration field}, can be directly related to the sequence $\{\rho_n\}_{n\geq 1}$ via the formula
\begin{align}\label{ACFE from Mauricio Paper}
		c(t,x) = \sum_{n\geq 1}n\int_{\mathbb{R}^{n-1}}\rho_n(t,y_1,...,y_{n-1},x)dy_1...dy_{n-1},\quad t\geq 0,x\in\mathbb{R};
	\end{align}
(for such derivation we refer to \cite{delRazo}). We are now ready to state our second main result.

	\begin{theorem}\label{main 2}
		The average concentration field \eqref{ACFE from Mauricio Paper} associated with the system \eqref{CDME of BBM} solves the reaction-diffusion partial differential equation
		\begin{align}
		\begin{cases}\label{reaction-diffusion equation}
			\partial_tc(t,x)=\frac{1}{2}\partial_{x}^2c(t,x)+c(t,x),\quad t>0,x\in\mathbb{R};\\
			c(0,x) = \delta_{0}(x),\quad x\in\mathbb{R}.
		\end{cases}	
		\end{align}
	\end{theorem}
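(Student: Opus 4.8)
\section*{Proof proposal for Theorem \ref{main 2}}

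The plan is to obtain \eqref{reaction-diffusion equation} by taking the system \eqref{CDME of BBM} from Theorem \ref{main 1}, integrating the $n$-th equation over the first $n-1$ spatial variables, multiplying by $n$, and summing over $n\geq 1$. Introduce the marginals
\begin{align*}
q_n(t,x):=\int_{\mathbb{R}^{n-1}}\rho_n(t,y_1,\ldots,y_{n-1},x)\,dy_1\cdots dy_{n-1},\qquad m_n(t):=\int_{\mathbb{R}^n}\rho_n(t,y_1,\ldots,y_n)\,dy_1\cdots dy_n,
\end{align*}
so that by \eqref{ACFE from Mauricio Paper} we have $c(t,x)=\sum_{n\geq1}n\,q_n(t,x)$, while \eqref{constraint} reads $\sum_{n\geq1}m_n(t)=1$. (By symmetry of $\rho_n$ it does not matter which variable is left free.) The heat-kernel structure encoded in the recursive representation \eqref{c} gives Gaussian-type spatial decay of each $\rho_n$ together with its first derivatives, and the mass equation $\dot m_n=\sum_{k=1}^{n-1}m_km_{n-k}-m_n$ (obtained by integrating \eqref{CDME of BBM} over all variables) yields $m_1(t)=e^{-t}$, $m_n(t)=e^{-t}(1-e^{-t})^{n-1}$, hence geometric decay in $n$; these two facts are what legitimate the interchanges of $\sum$, $\int$ and $\partial$ performed below, and fix the normalization of $\hat\otimes$ as $\rho_k\hat\otimes\rho_{n-k}=\binom{n}{k}^{-1}\sum_{|I|=k}\rho_k(y_I)\rho_{n-k}(y_{I^c})$.

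First I would deal with the diffusion and death terms. Integrating $\tfrac12\sum_{k,l=1}^n\partial^2_{y_ky_l}\rho_n$ over $y_1,\ldots,y_{n-1}$, every summand with $k\leq n-1$ or $l\leq n-1$ contributes a boundary term that vanishes by the spatial decay of $\rho_n$; only the $k=l=n$ term survives, giving $\tfrac12\partial_x^2 q_n(t,x)$. The death term $-\rho_n$ integrates to $-q_n(t,x)$. After multiplying by $n$ and summing, these produce exactly $\tfrac12\partial_x^2 c(t,x)-c(t,x)$.

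The combinatorial heart is the birth term $\sum_{k=1}^{n-1}\rho_k\hat\otimes\rho_{n-k}$. Using the above form of $\hat\otimes$ and splitting the sum over subsets $I$ of $\{1,\ldots,n\}$ with $|I|=k$ according to whether the free index $n$ lies in $I$ or in $I^c$ (there are $\binom{n-1}{k-1}$ of the former and $\binom{n-1}{k}$ of the latter), one gets
\begin{align*}
\int_{\mathbb{R}^{n-1}}\rho_k\hat\otimes\rho_{n-k}(t,y_1,\ldots,y_{n-1},x)\,dy_1\cdots dy_{n-1}=\frac{k}{n}\,q_k(t,x)\,m_{n-k}(t)+\frac{n-k}{n}\,m_k(t)\,q_{n-k}(t,x),
\end{align*}
since $\binom{n-1}{k-1}/\binom{n}{k}=k/n$ and $\binom{n-1}{k}/\binom{n}{k}=(n-k)/n$. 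Multiplying by $n$ and relabelling $k\mapsto n-k$ in the second contribution shows the two halves coincide, so the birth term's contribution to $\partial_t c_n$ equals $2\sum_{k=1}^{n-1}k\,q_k(t,x)\,m_{n-k}(t)$. Summing over $n\geq2$ (the $n=1$ birth term is empty) and applying Fubini to the double series,
\begin{align*}
\sum_{n\geq2}2\sum_{k=1}^{n-1}k\,q_k(t,x)\,m_{n-k}(t)=2\Big(\sum_{k\geq1}k\,q_k(t,x)\Big)\Big(\sum_{j\geq1}m_j(t)\Big)=2\,c(t,x)\cdot1=2\,c(t,x).
\end{align*}
Collecting the three contributions gives $\partial_t c=\tfrac12\partial_x^2 c+2c-c=\tfrac12\partial_x^2 c+c$. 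Finally, the initial condition: since $\rho_1(0,\cdot)=\delta_0$ and $\rho_n(0,\cdot)=0$ for $n\geq2$, only the $n=1$ term of \eqref{ACFE from Mauricio Paper} survives at $t=0$, yielding $c(0,x)=\delta_0(x)$.

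I expect the main obstacle to be purely analytic rather than structural: namely, rigorously justifying differentiation under the infinite sum and the integral sign, the vanishing of the boundary terms in the Laplacian integration, and the Fubini rearrangement of the birth double series. All of these should follow from uniform-in-$n$ Gaussian bounds on $\rho_n$ and $\partial\rho_n$ extracted from the recursion in Theorem \ref{main 1} (or, equivalently, from the McKean-type representation), combined with the geometric decay of $m_n(t)$; making these estimates precise and uniform on compact time intervals is the step requiring care, whereas the algebraic identities above are exact.
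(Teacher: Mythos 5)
Your proof is correct, and all the algebra checks out (the normalization you fix for $\hat\otimes$ is the right one — it is the one under which integrating the birth term over all variables gives $m_km_{n-k}$, consistently with the Wick-product convention used elsewhere in the paper, and your $m_n(t)=e^{-t}(1-e^{-t})^{n-1}$ is indeed the law of $\mathtt{n}(t)$). However, your route is genuinely different from the paper's. You work directly on the system \eqref{CDME of BBM}: integrate out $n-1$ variables, kill the mixed second derivatives by integration by parts, and resum the convolution-type birth term via Fubini against $\sum_j m_j(t)=1$. The paper instead packages the whole system into a single abstract equation $\partial_t\Phi=\tfrac12 d\Gamma(\partial)^2\Phi+\Phi^{\diamond 2}-\Phi$ for the generalized process $\Phi(t)=\sum_n I_n(\rho_n(t,\cdot))$, represents $c(t,x)=\langle\langle D_x\Phi(t),\mathtt{E}(1)\rangle\rangle$, and then obtains the PDE from three algebraic identities: the commutation relation $D_xd\Gamma(\partial)^2=(d\Gamma(\partial)^2+2d\Gamma(\partial)\tfrac{d}{dx}+\tfrac{d^2}{dx^2})D_x$ together with $\langle\langle d\Gamma(\partial)\,\cdot\,,\mathtt{E}(1)\rangle\rangle=0$ (this is exactly your vanishing boundary terms), and the factorization $\langle\langle D_x\Phi^{\diamond 2},\mathtt{E}(1)\rangle\rangle=2\langle\langle\Phi,\mathtt{E}(1)\rangle\rangle\langle\langle D_x\Phi,\mathtt{E}(1)\rangle\rangle=2c(t,x)$ via the normalization constraint (this is exactly your subset-splitting plus Fubini computation). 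So your argument is the ``de-abstracted'' shadow of the paper's: what you buy is elementarity and transparency of the combinatorics; what the Malliavin/white-noise formalism buys is that the interchanges of sums, integrals and derivatives you rightly flag as the delicate analytic point are absorbed into the calculus of generalized random variables and dual pairings, where these manipulations are legitimate by construction rather than needing uniform-in-$n$ Gaussian bounds on $\rho_n$ and its derivatives.
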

		
	The paper is organized as follows: Section 3 contains the proof of Theorem \ref{main 1} which is essentially based on the McKean representation for the solution to the Fisher-Kolmogorov-Petrovskii-Piskunov equation and its connection to a certain stochastic partial differential equation. Here, we also collect some notions and results from Malliavin Calculus needed for the proof of our main theorems: the implementation of these techniques represents one of the principal novelties of our contribution; then, in Section 4 we describe the proof of Theorem \ref{main 2} which is inspired by some ideas from \cite{Lanconelli_CDME}.

		\section{Proof of Theorem \ref{main 1}}
	
	It is well known \cite{McKean,BBM} that, for a continuous function $f:\mathbb{R}\to [0,1]$ and branching Brownian motion $\{X_k(t):k\leq \mathtt{n}(t)\}_{t\geq 0}$, the function
	\begin{equation}
		(t,x)\mapsto u(t,x):=\mathbb{E}\left[ \prod_{k=1}^{\mathtt{n}(t)}f(x-X_k(t)) \right]
		\label{McKean}
	\end{equation}
	is solution to the Fisher-Kolmogorov-Petrovskii-Piskunov equation \cite{fisher_paper,KPP}
	\begin{align}
		\begin{cases}\label{PDE}
			\partial_tu(t,x)=\frac{1}{2}\partial_{xx}u(t,x)+u(t,x)^2-u(t,x),& t>0,x\in\mathbb{R};\\
			u(0,x)=f(x),& x\in\mathbb{R}.
		\end{cases}
	\end{align}
	According to such representation and in view of \eqref{def rho} we can write by means of the Law of Total Probability that
	\begin{align}\label{a}
		u(t,x)=&\mathbb{E}\left[\prod_{k=1}^{\mathtt{n}(t)}f(x-X_k(t))\right]\nonumber\\
		=&\sum_{n\geq 1}\int_{\mathbb{R}^n}\mathbb{E}\left[\prod_{k=1}^{\mathtt{n}(t)}f(x-X_k(t))\Bigg|\mathtt{n}(t)=n,X(t)=y\right]\rho_n(t,y)dy\nonumber\\
		=&\sum_{n\geq 1}\int_{\mathbb{R}^n}\prod_{k=1}^{n}f(x-y_k)\rho_n(t,y)dy\nonumber\\
		=&\sum_{n\geq 1}\int_{\mathbb{R}^n}\prod_{k=1}^{n}f(y_k)\rho_n(t,x\mathtt{1}_n-y)dy\nonumber\\		
		=&\sum_{n\geq 1}\int_{\mathbb{R}^n}f^{\otimes n}(y)\rho_n(t,x\mathtt{1}_n-y)dy\nonumber\\
		=&\langle\langle \hat{U}(t,x),\mathtt{E}(f)\rangle\rangle,
	\end{align}
	where for $n\geq 1$ we used the notation $\mathtt{1}_n:=(1,1,...,1)\in\mathbb{R}^n$ and set
	\begin{align}\label{U hat}
		\hat{U}(t,x):=\sum_{n\geq 1}I_n(\rho_n(t,x\mathtt{1}_n-\cdot)),\quad t\geq 0, x\in\mathbb{R}.
	\end{align}
	In \eqref{U hat} the notation $I_n$ stands for $n$-th order multiple It\^o integral with respect to an auxiliary two sided Brownian motion, say $\{B_y\}_{y\in\mathbb{R}}$. Moreover, in \eqref{a} the brackets $\langle\langle \cdot,\cdot\rangle\rangle$ denote dual pairing between the generalized random variable $\hat{U}(t,x)$ and the \emph{stochastic exponential}  $\mathtt{E}(f)$, which is the smooth random variable defined by
	\begin{align*}
	\mathtt{E}(f):=\sum_{n\geq 0}I_n\left(\frac{f^{\otimes n}}{n!}\right)=\exp\left\{I_1(f)-\frac{1}{2}\int_{\mathbb{R}}f^2(y)dy\right\},\quad f\in C_0^{\infty}(\mathbb{R}).
	\end{align*}
 For more details on smooth and generalized random variables we refer the reader to \cite{SPDEbook} and \cite{Kuo}.\\
	We now want to relate the expression in \eqref{U hat} with the solution to a Wick-type stochastic partial differential equation investigated in \cite{uniqueness_wick}. In the sequel, the symbol $\diamond$ will denote the so-called \emph{Wick product}: given two generalized random variables $X$ and $Y$, their Wick product is defined to be the unique element $X\diamond Y$ such that
	\begin{align*}
		\langle\langle X\diamond Y,\mathtt{E}(f)\rangle\rangle=\langle\langle X,\mathtt{E}(f)\rangle\rangle\langle\langle Y,\mathtt{E}(f)\rangle\rangle,\quad\mbox{for all }f\in C_0^{\infty}(\mathbb{R}).
	\end{align*}
Alternatively, if 
\begin{align*}
	X=\sum_{n\geq 0}I_n(h_n)\quad\mbox{ and }\quad X=\sum_{n\geq 0}I_n(l_n),
\end{align*}
then 
\begin{align*}
	X\diamond Y=\sum_{n\geq 0}I_n\left(\sum_{k=0}^nh_k\hat{\otimes}l_{n-k}\right).
\end{align*}
	The transformation
	\begin{align*}
		X\mapsto\mathcal{S}(X)(f):=\langle\langle X,\mathtt{E}(f)\rangle\rangle=\sum_{n\geq 0}\int_{\mathbb{R}^n}h_n(x)f^{\otimes n}(x)dx, \quad f\in C_0^{\infty}(\mathbb{R}),
	\end{align*}
	is injective and called $\mathcal{S}$-transform of $X$.
	
	\begin{lemma}\label{lemma SPDE}
		Let $\{U(t,x)\}_{t\geq 0,x\in\mathbb{R}}$ be the unique solution to
		\begin{align}
			\begin{cases}\label{SPDE}
				\partial_tU(t,x)=\frac{1}{2}\partial^2_{x}U(t,x)+U(t,x)^{\diamond 2}-U(t,x),& t>0,x\in\mathbb{R};\\
				U(0,x)=W_x,& x\in\mathbb{R};
			\end{cases}
		\end{align}
	here, $W_x$ denotes the white noise at $x$, i.e. $W_x=I_1(\delta_0(x-\cdot))$. Then, 
		\begin{align*}
			U(t,x)=\sum_{n\geq 1}I_n(g_n(t,x;\cdot)),\quad t\geq 0,x\in\mathbb{R}
		\end{align*}
		where $\{g_n\}_{n\geq 1}$ is the sequence of functions 
		\begin{align*}
		g_n:[0,+\infty[\times\mathbb{R}\times\mathbb{R}^{n}&\to\mathbb{R}\\
		(t,x,y_1,...,y_n)&\mapsto g_n(t,x;y_1,...,y_n)
		\end{align*}
		solving the system of partial differential equations
		\begin{align}
			\begin{cases}\label{system PDE}
				\partial_tg_n(t,x;y)=\frac{1}{2}\partial^2_{x}g_n(t,x;y)+\sum_{k=1}^{n-1}g_k\hat{\otimes}g_{n-k}(t,x;y)\\
				\quad\quad\quad\quad\quad\quad\quad-g_n(t,x;y),\quad n\geq 1,t>0,x\in\mathbb{R},y\in\mathbb{R}^n;\\
				g_1(0,x;y_1)=\delta_0(x-y_1),\quad x,y_1\in\mathbb{R};\\
				g_n(0,x;y)=0,\quad n\geq 2,x\in\mathbb{R},y\in\mathbb{R}^n.
			\end{cases}
		\end{align}
	Moreover, for $f\in C_0^{\infty}(\mathbb{R})$ the function
	\begin{align*}
		(t,x)\mapsto (\mathcal{S}U(t,x))(f)=\langle\langle U(t,x),\mathtt{E}(f)\rangle\rangle
	\end{align*}
	solves equation \eqref{PDE}. 
	\end{lemma}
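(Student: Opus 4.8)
The plan is to use the $\mathcal{S}$-transform as a bridge between the three equations in the statement: since $\mathcal{S}$ is injective and converts Wick products into ordinary products, applying it to \eqref{SPDE} should reproduce \eqref{PDE}, whereas reading \eqref{SPDE} chaos component by chaos component should reproduce the hierarchy \eqref{system PDE}. As a preliminary step I would invoke \cite{uniqueness_wick} to guarantee that \eqref{SPDE} admits a unique solution $\{U(t,x)\}_{t\geq 0,x\in\mathbb{R}}$ in the relevant space of Kondratiev stochastic distributions, depending continuously (and, in $t$, differentiably) on its arguments; this legitimises the manipulations below.

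Next I would write the Wiener--It\^o chaos expansion $U(t,x)=\sum_{n\geq 0}I_n(g_n(t,x;\cdot))$ and plug it into \eqref{SPDE}. Using the kernel formula for the Wick product, $U(t,x)^{\diamond 2}=\sum_{n\geq 0}I_n\big(\sum_{k=0}^n g_k\hat{\otimes}g_{n-k}\big)$. The initial datum $W_x=I_1(\delta_0(x-\cdot))$ has vanishing zeroth-order kernel, so the zeroth chaos of \eqref{SPDE} reads $\partial_t g_0=\tfrac12\partial_x^2 g_0+g_0^2-g_0$ with $g_0(0,\cdot)=0$, forcing $g_0\equiv 0$; hence $\sum_{k=0}^n g_k\hat{\otimes}g_{n-k}=\sum_{k=1}^{n-1}g_k\hat{\otimes}g_{n-k}$. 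Matching the $n$-th chaos for every $n\geq 1$, together with the initial conditions read off from $U(0,x)=W_x$, yields exactly \eqref{system PDE}. Conversely, solving \eqref{system PDE} by Duhamel's principle --- integrating factor $e^{t}$ and the heat semigroup generated by $\tfrac12\partial_x^2$ --- reproduces precisely the recursive formulas for $g_n$ given in Theorem \ref{main 1}; in particular $g_1(t,x;y_1)=e^{-t}\mathtt{p}_t(x-y_1)$, which is square-integrable for every $t>0$ despite the singular datum, and each $g_n$ is a finite sum of iterated time--space convolutions of heat kernels.

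For the last assertion, fix $f\in C_0^{\infty}(\mathbb{R})$ and set $u(t,x):=(\mathcal{S}U(t,x))(f)=\langle\langle U(t,x),\mathtt{E}(f)\rangle\rangle$. Applying $\mathcal{S}$ to \eqref{SPDE} and using that the pairing against the fixed element $\mathtt{E}(f)$ commutes with $\partial_t$ and $\partial_x^2$, that $\mathcal{S}(X^{\diamond 2})=(\mathcal{S}X)^2$, and that $\mathcal{S}(W_x)(f)=\int_{\mathbb{R}}\delta_0(x-y)f(y)\,dy=f(x)$, one concludes that $u$ solves \eqref{PDE}.

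The main technical obstacle is the bookkeeping in the second step: one has to make sure the chaos series $\sum_{n}I_n(g_n(t,x;\cdot))$ converges in the chosen Kondratiev space uniformly on compact time intervals, so that it may be differentiated term by term, and that the mild solution of \eqref{system PDE} assembled in this way is genuinely the unique solution provided by \cite{uniqueness_wick} rather than merely a formal one. The required bounds come from a recursive (Catalan-type) convolution estimate on $\|g_n(t,\cdot)\|_{L^2(\mathbb{R}^n)}$ combined with the factorial weights tolerated by the Kondratiev norms, and the distributional initial condition $W_x$ --- equivalently $g_1(0,x;\cdot)=\delta_0(x-\cdot)\notin L^2$ --- is absorbed by working throughout with the mild formulation, in which the heat semigroup regularises the kernels instantaneously.
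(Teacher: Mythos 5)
Your proposal is correct and follows essentially the same route as the paper: project \eqref{SPDE} onto the Wiener chaos spaces (using the kernel formula for the Wick product and the fact that the initial datum lies in the first chaos) to obtain \eqref{system PDE}, then apply the $\mathcal{S}$-transform, via $\mathcal{S}(X^{\diamond 2})=(\mathcal{S}X)^2$ and $\mathcal{S}(W_x)(f)=f(x)$, to recover \eqref{PDE}. Your explicit treatment of the zeroth-order kernel $g_0\equiv 0$ and the remarks on convergence and the mild formulation are details the paper leaves implicit, but they do not change the argument.
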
	
	
	\begin{proof}	
		For the notion of solution to the problem \eqref{SPDE} we refer the reader to \cite{uniqueness_wick}. Projecting equation \eqref{SPDE} onto chaos spaces of different orders yields a system of partial differential equations solved by the sequence of kernels $\{g_n\}_{n\geq 1}$ of $U(t,x)$; more precisely, recalling the definition of Wick product and the fact that the initial condition in \eqref{SPDE} is made of a first order Wiener chaos one easily see that \eqref{system PDE} corresponds to the aforementioned system of partial differential equations. On the other hand, thanks to the interplay between Wick product and $S$-transform, i.e.
		\begin{align*}
			\langle\langle U(t,x)^{\diamond 2},\mathtt{E}(f)\rangle\rangle=\langle\langle U(t,x),\mathtt{E}(f)\rangle\rangle^2,\quad t\geq 0,x\in\mathbb{R},f\in C_0^{\infty}(\mathbb{R}),
		\end{align*}
	and the identity
	\begin{align*}
		\langle\langle W_x,\mathtt{E}(f)\rangle\rangle=f(x),\quad x\in\mathbb{R},f\in C_0^{\infty}(\mathbb{R}),
	\end{align*}
we see that an application of the $S$-transform to both sides of \eqref{SPDE} reduces this problem to \eqref{PDE}.
	\end{proof}	
	
	\begin{remark}
		The triangular structure of system \eqref{system PDE} allows for an explicit recursive representation of its solution; namely,
		\begin{align*}
		\begin{cases}
		g_1(t,x;y_1):=e^{-t}\mathtt{p}_{t}(x-y_1),\quad t\geq 0,x,y_1\in\mathbb{R};\\
		g_n(t,x;y_1,...,y_n):=e^{-t}\int_0^t\int_{\mathbb{R}}\mathtt{p}_{t-s}(x-z)e^{s}\sum_{k=1}^{n-1}g_k\hat{\otimes}g_{n-k}(s,z;y_1,...,y_n)dzds,\\
		\quad\quad\quad\quad\quad\quad\quad\quad\quad\quad\quad\quad\quad n\geq 2, t\geq 0\mbox{ and } x,y_1,...,y_n\in\mathbb{R}.
		\end{cases}
		\end{align*}
	\end{remark}
	
Now, employing the conclusion of Lemma \ref{lemma SPDE} in combination with \eqref{a} we can affirm that
	\begin{align*}
		\langle\langle \hat{U}(t,x),\mathtt{E}(f)\rangle\rangle=\langle\langle U(t,x),\mathtt{E}(f)\rangle\rangle\quad\mbox{for all $t\geq 0$, $x\in\mathbb{R}$ and $f\in C_0^{\infty}(\mathbb{R})$};
	\end{align*}
	from the invertibility of the $S$-transform and uniqueness of chaos expansion's kernels we then deduce
	\begin{align*}
		\rho_n(t,x\mathtt{1}_n-y)=g_n(t,x;y),\quad\mbox{ for all }n\geq 1, t\geq 0, x\in\mathbb{R},y\in\mathbb{R}^n
	\end{align*}
	and thus
	\begin{align*}
		\rho_n(t,y_1,...,y_n)=&\rho_n(t,x-(x-y_1),...,x-(x-y_n))\\
		=&g_n(t,x;x-y_1,...,x-y_n),\quad n\geq 1,t\geq 0,x,y_1,...,y_n\in\mathbb{R},
	\end{align*}	
	which corresponds to \eqref{c}. Lastly, recalling that the sequence $\{g_n\}_{n\geq 1}$ solves the system of equations \eqref{system PDE} we have
	\begin{align*}
		\partial_t\rho_n(t,x\mathtt{1}_n-y)=&\partial_tg_n(t,x;y)\\
		=&\frac{1}{2}\partial^2_{x}g_n(t,x;y)+\sum_{k=1}^{n-1}g_k\hat{\otimes}g_{n-k}(t,x;y)-g_n(t,x;y)\\
		=&\frac{1}{2}\partial^2_{x}(\rho_n(t,x\mathtt{1}_n-y))+\sum_{k=1}^{n-1}\rho_k\hat{\otimes}\rho_{n-k}(t,x\mathtt{1}_n-y)-\rho_n(t,x\mathtt{1}_n-y)\\
		=&\frac{1}{2}\sum_{j,k=1}^n\partial^2_{y_jy_k}\rho_n(t,x\mathtt{1}_n-y)+\sum_{k=1}^{n-1}\rho_k\hat{\otimes}\rho_{n-k}(t,x\mathtt{1}_n-y)-\rho_n(t,x\mathtt{1}_n-y);
	\end{align*}
if we now replace $x\mathtt{1}_n-y$ with $y$ we obtain the desired system \eqref{CDME of BBM}, completing the proof of Theorem \ref{main 1}.

	\begin{remark}	
	One can see the interplay between the different Brownian particles generated by the branching mechanism in the diffusion term
	\begin{align*} 
	\frac{1}{2} \sum_{k,l=1}^n\partial^2_{y_ky_l}\rho_n(t,y_1,...,y_n).
	\end{align*}
	This is due to the fact that the newly created particles do not emerge at an independent random location but enter to the system at the same exact location of the parent particle. Such feature is in contrast with the models considered in \cite{delRazo} and \cite{delRazo2} where diffusion is described by a simple Laplace operator.
\end{remark}	

\section{Proof of Theorem \ref{main 2}}
	
	Inspired by the ideas presented in \cite{Lanconelli_CDME} and further developed in  \cite{bdCDME} and \cite{mutual_annihilation_paper} we now rewrite the system of equations \eqref{CDME of BBM} as a single abstract equation containing Malliavin Calculus operators. For details on the subject the reader is referred to the books \cite{Hubook}, \cite{Janson} and \cite{Nualart}; here, we recall few basic definitions and properties  needed for proving our result. In the sequel, we denote by $\{D_x\Phi\}_{x\in\mathbb{R}}$ the \emph{Malliavin derivative} of $\Phi=\sum_{n\geq 0}I_n(h_n)$ defined as
	\begin{align*}
		D_x\Phi:=\sum_{n\geq 1}nI_{n-1}(h_{n}(\cdot,x)), \quad x\in \mathbb{R};
	\end{align*}
	notice that
	\begin{align*}
		D_x\mathtt{E}(f)=f(x)\mathtt{E}(f),\quad\mbox{ for all }f\in C_0^{\infty}(\mathbb{R}).
	\end{align*} 
	Moreover, for a possibly unbounded $A:L^2(\mathbb{R})\to L^2(\mathbb{R})$ we define its \emph{differential second quantization operator} as
	\begin{align*}
		d\Gamma(A)\Phi:=\sum_{n\geq 1}I_n\left(\sum_{i=1}^nA_ih_n\right),
	\end{align*}
	where $A_i$ stands for the operator $A$ acting on the $i$-th variable of $h_n$. 
	The following useful identities hold true:
	\begin{align*}
		\begin{split}
			&\langle\langle d\Gamma(A)\Phi,1\rangle\rangle=0;\quad\langle\langle d\Gamma(A)\Phi,\Psi\rangle\rangle=\langle\langle\Phi, d\Gamma(A^{\star})\Psi\rangle\rangle;\\
			&d\Gamma(A)\mathtt{E}(f)=\mathtt{E}(f)\diamond I_1(Af);\quad D_x(\Phi\diamond \Psi)=D_x\Phi\diamond \Psi+\Phi\diamond D_x\Psi.
		\end{split}
	\end{align*}

	\begin{lemma}
		Let $\{\rho_n\}_{n\geq 1}$ be solution to \eqref{CDME of BBM}. Then, the generalized stochastic process
		\begin{align}\label{Phi def}
		\Phi(t):=\sum_{n\geq 0}I_n(\rho_n(t,\cdot)),\quad t\geq 0,
	    \end{align}
		solves the abstract equation 
		\begin{align}
			\begin{cases}\label{Phi equation}
			\partial_t\Phi(t) = \frac{1}{2}d\Gamma(\partial)^2\Phi(t)+\Phi(t)^{\diamond2}-\Phi(t),\quad t>0;\\
			\Phi(0)=W_0.
			\end{cases}
		\end{align}
	\end{lemma}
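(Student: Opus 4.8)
The plan is to prove this lemma by running the projection argument of Lemma~\ref{lemma SPDE} \emph{in reverse}: instead of reading a scalar system off a Wick‑type SPDE, I will reassemble the single abstract equation \eqref{Phi equation} out of the scalar system \eqref{CDME of BBM} that $\{\rho_n\}_{n\geq 1}$ solves by hypothesis. Throughout, \eqref{Phi equation} is to be understood in the sense of generalized stochastic processes, i.e.\ after pairing with every stochastic exponential $\mathtt{E}(f)$, equivalently after projecting onto each Wiener chaos $I_n$ — the same notion of solution used in Lemma~\ref{lemma SPDE}. I also record at the outset the convention $\rho_0\equiv 0$, which is forced here because binary branching Brownian motion never goes extinct, so $\mathbb{P}(\mathtt{n}(t)=0)=0$; with this convention the series \eqref{Phi def} is really $\Phi(t)=\sum_{n\geq 1}I_n(\rho_n(t,\cdot))$ and its $0$-th chaos component vanishes, matching the structure of \eqref{SPDE}.

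First I would translate each operator on the right-hand side of \eqref{Phi equation} into its action on chaos kernels. For the diffusion term, iterating the definition of the differential second quantization operator gives
\[
d\Gamma(\partial)^2\Phi(t)=d\Gamma(\partial)\Big(\sum_{n\geq 1}I_n\big(\textstyle\sum_{k=1}^n\partial_{y_k}\rho_n(t,\cdot)\big)\Big)=\sum_{n\geq 1}I_n\Big(\textstyle\sum_{k,l=1}^n\partial^2_{y_ky_l}\rho_n(t,\cdot)\Big),
\]
and it is essential here that $d\Gamma(\partial)^2$ denotes the \emph{composition} $d\Gamma(\partial)\circ d\Gamma(\partial)$, which yields the full double sum $\sum_{k,l=1}^n$ including the mixed terms $k\neq l$, and not $d\Gamma(\partial^2)$, which would contribute only the diagonal $\sum_{k=1}^n\partial^2_{y_ky_k}$. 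For the reaction term, the kernel characterization of the Wick product gives $\Phi(t)^{\diamond 2}=\sum_{n\geq 0}I_n\big(\sum_{k=0}^n\rho_k\hat{\otimes}\rho_{n-k}(t,\cdot)\big)$; since $\rho_0\equiv 0$, the boundary terms $k=0$ and $k=n$ drop and what survives is exactly $\sum_{n\geq 2}I_n\big(\sum_{k=1}^{n-1}\rho_k\hat{\otimes}\rho_{n-k}(t,\cdot)\big)$, i.e.\ the quadratic term of \eqref{CDME of BBM}. The linear term is immediate, $-\Phi(t)=-\sum_{n\geq 1}I_n(\rho_n(t,\cdot))$, and the time derivative passes through $I_n$ by linearity (at the level of $\mathcal{S}$-transforms, $\mathcal{S}(\partial_t\Phi(t))(f)=\partial_t\mathcal{S}(\Phi(t))(f)$). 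Consequently the $n$-th chaos component of \eqref{Phi equation} is precisely the $n$-th line of the system \eqref{CDME of BBM}, so the abstract equation holds componentwise, hence in the space of generalized random variables; and $\Phi(0)=\sum_{n\geq 1}I_n(\rho_n(0,\cdot))=I_1(\delta_0)=W_0$ by the initial data in \eqref{CDME of BBM} and the definition of white noise.

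The one point that requires genuine care — and the step I expect to be the main obstacle — is functional-analytic rather than algebraic: one must fix a single topological space of generalized random variables (a Hida space $(\mathcal{S})^{\star}$ or, since the Wick square has to be accommodated, a Kondratiev space $(\mathcal{S})_{-1}$) in which the three series defining $\Phi(t)$, $d\Gamma(\partial)^2\Phi(t)$ and $\Phi(t)^{\diamond 2}$ converge locally uniformly in $t$, so that the termwise differentiation in $t$ and the termwise reading-off of the equation above are legitimate and $t\mapsto\Phi(t)$ is genuinely differentiable there. This I would obtain either by propagating the recursive bounds on $\{g_n\}_{n\geq 1}$ coming from the explicit representation in the Remark after Lemma~\ref{lemma SPDE} (recall $\rho_n(t,y)=g_n(t,x;x\mathtt{1}_n-y)$, so the norm estimates transfer verbatim), or, more economically, by invoking the well-posedness of \eqref{SPDE} from \cite{uniqueness_wick} together with the identification $\rho_n(t,x\mathtt{1}_n-\cdot)=g_n(t,x;\cdot)$ established in the proof of Theorem~\ref{main 1}: along the latter route $\Phi(t)$ is recognized as living in the same space as the solution $U(t,x)$ of \eqref{SPDE}, and the verification above is simply the re-expression of that solution in the translation-normalized kernels $\rho_n$.
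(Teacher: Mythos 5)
Your proposal is correct and follows essentially the same route as the paper: the paper's proof likewise computes $\partial_t\Phi(t)$ chaos level by chaos level, identifies the double sum $\sum_{k,l=1}^n\partial^2_{y_ky_l}\rho_n$ with $d\Gamma(\partial)^2$, the convolution sum with $\Phi(t)^{\diamond 2}$, and checks $\Phi(0)=I_1(\delta_0)=W_0$. Your additional observations — the convention $\rho_0\equiv 0$ killing the boundary terms of the Wick square, the distinction between $d\Gamma(\partial)^2$ and $d\Gamma(\partial^2)$, and the convergence of the series in a Kondratiev-type space — are points the paper leaves implicit, but they do not change the argument.
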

	
	\begin{proof}
	We have
	\begin{align*}
		\partial_t\Phi(t)=&\partial_t\sum_{n\geq 0}I_n(\rho_n(t,\cdot))\\
		=&\sum_{n\geq 0}I_n(\partial_t\rho_n(t,\cdot))\\
		=&\sum_{n\geq 0}I_n\left(\frac{1}{2} \sum_{k,l=1}^n\partial^2_{y_ky_l}\rho_n(t,\cdot)\right)+\sum_{n\geq 0}I_n\left(\sum_{k=1}^{n-1}\rho_k\hat{\otimes} \rho_{n-k}(t,\cdot)\right)\\
		&-\sum_{n\geq 0}I_n(\rho_n(t,\cdot))\\
		=&\frac{1}{2}\sum_{n\geq 0}d\Gamma(\partial)^2I_n\left(\rho_n(t,\cdot)\right)+\sum_{n\geq 0}I_n(\rho_n(t,\cdot))\diamond\sum_{n\geq 0}I_n(\rho_n(t,\cdot))\\
		&-\sum_{n\geq 0}I_n(\rho_n(t,\cdot))\\
		=&\frac{1}{2}d\Gamma(\partial)^2\Phi(t)+\Phi(t)^{\diamond2}-\Phi(t).
	\end{align*}
	Here, we utilized \eqref{CDME of BBM} and the definitions of differential second quantization operator and Wick product. Moreover,
	\begin{align*}
		\Phi(0)=\sum_{n\geq 0}I_n(\rho_n(0,\cdot))=I_1(\delta_0)=W_0.
	\end{align*}
	\end{proof}

\begin{remark}\label{w}
	Notice that using the notation introduced in the previous lemma condition \eqref{constraint} reads 
	\begin{align*}
	\langle\langle \Phi(t),\mathtt{E}(1)\rangle\rangle=1,\quad\mbox{ for all }t\geq 0.
	\end{align*}
\end{remark}
	
	The usefulness of switching from system \eqref{CDME of BBM} to equation \eqref{Phi equation} becomes evident in the following result where a compact representation for the average concentration field is proposed.
	
	\begin{lemma}
		The average concentration field \eqref{ACFE from Mauricio Paper} enjoys the representation
	\begin{align}\label{dd}
	c(t,x)=\langle\langle D_x\Phi(t),\mathtt{E}(1)\rangle\rangle,\quad t\geq 0, x\in\mathbb{R}.
	\end{align}
	\end{lemma}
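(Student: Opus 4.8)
The plan is to identify the chaos expansion of $D_x\Phi(t)$ and then evaluate the dual pairing with $\mathtt{E}(1)$ directly, matching the outcome with formula \eqref{ACFE from Mauricio Paper}. First I would apply the definition of the Malliavin derivative to $\Phi(t)=\sum_{n\geq 0}I_n(\rho_n(t,\cdot))$: since the order-$n$ kernel of $\Phi(t)$ is $\rho_n(t,\cdot)$, this yields
\begin{align*}
D_x\Phi(t)=\sum_{n\geq 1}nI_{n-1}\big(\rho_n(t,\cdot,x)\big),\quad x\in\mathbb{R},
\end{align*}
where the symmetry of $\rho_n$ in its $n$ arguments is precisely what lets us freeze the variable $x$ in the last slot (equivalently, in any slot).

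Next I would pair this expansion with the stochastic exponential. Using that for an order-$m$ kernel $h$ one has $\langle\langle I_m(h),\mathtt{E}(f)\rangle\rangle=\int_{\mathbb{R}^m}h(y)f^{\otimes m}(y)\,dy$, specializing to $f\equiv 1$ and interchanging the pairing with the series over $n$ gives
\begin{align*}
\langle\langle D_x\Phi(t),\mathtt{E}(1)\rangle\rangle=\sum_{n\geq 1}n\int_{\mathbb{R}^{n-1}}\rho_n(t,y_1,\ldots,y_{n-1},x)\,dy_1\cdots dy_{n-1},
\end{align*}
and the right-hand side is exactly \eqref{ACFE from Mauricio Paper}, i.e. $c(t,x)$. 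This establishes \eqref{dd}; the algebraic identity is thus immediate once the two bookkeeping steps above are in place.

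The point that requires care — and essentially the only obstacle — is that $f\equiv 1\notin C_0^\infty(\mathbb{R})$, so $\langle\langle\cdot,\mathtt{E}(1)\rangle\rangle$ must be read as a limit along a sequence $f_N\in C_0^\infty(\mathbb{R})$ with $0\leq f_N\uparrow 1$, in the same spirit as the convention already used in Remark \ref{w}. Since every kernel $\rho_n$ is nonnegative, for each fixed $n$ the integral $\int_{\mathbb{R}^{n-1}}\rho_n(t,y_1,\ldots,y_{n-1},x)f_N^{\otimes(n-1)}(y_1,\ldots,y_{n-1})\,dy_1\cdots dy_{n-1}$ increases monotonically to the corresponding integral with $f_N$ replaced by $1$, and the exchange of this limit with the summation over $n$ is justified by Tonelli's theorem together with the finiteness of the limiting series — which holds because $\sum_{n\geq 1}n\int_{\mathbb{R}^n}\rho_n(t,y)\,dy=\mathbb{E}[\mathtt{n}(t)]=e^{t}<\infty$ (and which is in any case built into the definition of $c(t,x)$ through \eqref{ACFE from Mauricio Paper}). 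Granting this standard approximation and interchange argument, the proof reduces to the two displayed computations above.
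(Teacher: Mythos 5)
Your proposal is correct and follows essentially the same route as the paper: compute the chaos expansion $D_x\Phi(t)=\sum_{n\geq 1}nI_{n-1}(\rho_n(t,\cdot,x))$ from the definition of the Malliavin derivative, then evaluate the pairing with $\mathtt{E}(1)$ term by term and match the result with \eqref{ACFE from Mauricio Paper}. The additional care you take over $1\notin C_0^{\infty}(\mathbb{R})$ is a sensible refinement the paper leaves implicit, but it does not change the argument.
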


\begin{proof}
	From \eqref{Phi def} we can write
	\begin{align*}
		D_x\Phi(t)=D_x\sum_{n\geq 0}I_n(\rho_n(t,\cdot))=\sum_{n\geq 1}nI_{n-1}(\rho_n(t,\cdot,x));
	\end{align*}
	therefore,
	\begin{align*}
		\langle\langle D_x\Phi(t),\mathtt{E}(1)\rangle\rangle=\sum_{n\geq 1}n\int_{\mathbb{R}^{n-1}}\rho_n(t,y_1,...,y_{n-1},x)dy_1...dy_{n-1},
	\end{align*}
and since the last member above agrees with \eqref{ACFE from Mauricio Paper} the validity of formula \eqref{dd} is proven.
\end{proof} 
		
We are now ready to prove Theorem \ref{main 2}. By means of the two previous lemmas we get
\begin{align}\label{z}
	\partial_tc(t,x)=&\partial_t\langle\langle D_x\Phi(t),\mathtt{E}(1)\rangle\rangle\nonumber\\
	=&\langle\langle \partial_tD_x\Phi(t),\mathtt{E}(1)\rangle\rangle\nonumber\\
	=&\langle\langle D_x\partial_t\Phi(t),\mathtt{E}(1)\rangle\rangle\nonumber\\
	=&\langle\langle D_x\left(\frac{1}{2}d\Gamma(\partial)^2\Phi(t)+\Phi(t)^{\diamond2}-\Phi(t)\right),\mathtt{E}(1)\rangle\rangle\nonumber\\
	=&\frac{1}{2}\langle\langle D_xd\Gamma(\partial)^2\Phi(t),\mathtt{E}(1)\rangle\rangle+\langle\langle D_x\Phi(t)^{\diamond2},\mathtt{E}(1)\rangle\rangle-\langle\langle D_x\Phi(t),\mathtt{E}(1)\rangle\rangle.
\end{align}		
Moreover,
\begin{itemize}		
	\item the following commutation relation holds:
	\begin{align}\label{comm dGamma}	D_xd\Gamma(\partial)=\left(d\Gamma(\partial)+\frac{d}{dx}\right)D_x.	
	\end{align}
	In fact, checking its validity on stochastic exponentials we get 	 
	\begin{align*}
	d\Gamma(\partial)D_x\mathtt{E}(h)=d\Gamma(\partial)\mathtt{E}(h) h(x)= h(x)\mathtt{E}(h)\diamond I_1(h')
	\end{align*}
	and 
	\begin{align*}
	D_xd\Gamma(\partial)\mathtt{E}(h)=D_x \mathtt{E}(h)\diamond I_1(h')= \mathtt{E}(h)h(x)\diamond I_1(h')+\mathtt{E}(h)h';
	\end{align*}
	thus,
	\begin{align*}
	D_xd\Gamma(\partial)\mathtt{E}(h)&=d\Gamma(\partial)D_x\mathtt{E}(h)+\mathtt{E}(h)h'\\
	&= d\Gamma(\partial)D_x\mathtt{E}(h)+  \frac{d}{dx}D_x\mathtt{E}(h);
	\end{align*}
\item iterating \eqref{comm dGamma} twice one gets
\begin{align}\label{comm double}
	D_xd\Gamma(\partial)^2=\left(d\Gamma(\partial)^2+2d\Gamma(\partial)\frac{d}{dx}+\frac{d^2}{dx^2}\right)D_x;
\end{align}
\item for all $\Phi=\sum_{n\geq 0}I_n(h_n)$ we have
\begin{align}\label{vacuum}
\langle\langle d\Gamma(\partial)\Phi,\mathtt{E}(1)\rangle\rangle=0.
\end{align}
In fact, using the properties of differential second quantization operators we can write
\begin{align*}
\langle\langle d\Gamma(\partial)\Phi,\mathtt{E}(1)\rangle\rangle=\langle\langle \Phi,d\Gamma(-\partial)\mathtt{E}(1)\rangle\rangle=\langle\langle \Phi,\mathtt{E}(1)\diamond I_1(0)\rangle\rangle=0.
\end{align*}
\end{itemize}
By virtue of \eqref{comm double} and \eqref{vacuum} the first term in \eqref{z} can be simplified to
\begin{align*}
	\frac{1}{2}\langle\langle D_xd\Gamma(\partial)^2\Phi(t),\mathtt{E}(1)\rangle\rangle&=\frac{1}{2}\langle\langle \frac{d^2}{dx^2}D_x\Phi(t),\mathtt{E}(1)\rangle\rangle\\
	&=\frac{1}{2}\frac{d^2}{dx^2}\langle\langle D_x\Phi(t),\mathtt{E}(1)\rangle\rangle\\
	&=\frac{1}{2}\frac{d^2}{dx^2}c(t,x).
\end{align*}
The second term in \eqref{z}, exploiting the chain rule for the Malliavin derivative and Wick product, becomes
\begin{align*}
	\langle\langle D_x\Phi(t)^{\diamond2},\mathtt{E}(1)\rangle\rangle&=2\langle\langle \Phi(t)\diamond D_x\Phi(t),\mathtt{E}(1)\rangle\rangle\\
	&=2\langle\langle \Phi(t),\mathtt{E}(1)\rangle\rangle\langle\langle D_x\Phi(t),\mathtt{E}(1)\rangle\rangle\\
	&=2\langle\langle D_x\Phi(t),\mathtt{E}(1)\rangle\rangle\\
	&=2c(t,x),
\end{align*} 
where we also utilized the observation in Remark \ref{w}. Combining those identities in \eqref{z} we conclude that
\begin{align*}
	\partial_tc(t,x)=\frac{1}{2}\partial_x^2c(t,x)+c(t,x).
\end{align*}
Moreover,
\begin{align*}
c(0,x)=\langle\langle D_x\Phi(0),\mathtt{E}(1)\rangle\rangle=\langle\langle D_xW_0,\mathtt{E}(1)\rangle\rangle=\delta_{0}(x).
\end{align*}
The proof of Theorem \ref{main 2} is now complete.	

\bibliographystyle{plain}
\bibliography{bibliography}
	
\end{document}